\def\@seccntformat#1{\csname the#1\endcsname.\ } 
\def\@biblabel#1{#1.} 
\date{}
\newenvironment{proof}[1][\hspace{-1.0ex}]%
{\par\addvspace{1mm}{\sc Proof\hspace{1.0ex}{#1}.} }%
{\quad$\blacktriangle$\par\addvspace{1mm}}
\newif\ifNoRemark
\def\addtheorem#1#2#3#4{
\ifthenelse{\equal{#2}{}}{}%
{\ifthenelse{\expandafter\isundefined\csname the#2\endcsname}{\newcounter{#2}}{}}
\newenvironment{#1}[1][\global\NoRemarktrue]
{\par\addvspace{2mm plus 0.5mm minus 0.2mm}\noindent 
{\bf #3}\ifthenelse{\equal{#2}{}}{}%
{\refstepcounter{#2}{\bf ~\csname the#2\endcsname}}%
{\bf \vphantom{##1}\ifNoRemark.\ \else\ (##1).\fi}\begingroup #4}%
{\endgroup\par\addvspace{1mm plus 0.5mm minus 0.2mm}\global\NoRemarkfalse}
\expandafter\newcommand\csname b#1\endcsname{\begin{#1}}
\expandafter\newcommand\csname e#1\endcsname{\end{#1}}
}
\title{On the gaps of the spectrum of volumes of trades%
\footnote{%
This is the peer reviewed version of the following article: 
D. S. Krotov, On the gaps of the spectrum of volumes of trades,
Journal of Combinatorial Designs, 26(3):119--126, 2008, 
which has been published in final form at 
\url{https://doi.org/10.1002/jcd.21592},
01 November 2017. 
This article may be used for non-commercial purposes 
in accordance with Wiley Terms and Conditions 
for Use of Self-Archived Versions.%
}%
}
\author{Denis~S.~Krotov%
\thanks{Sobolev Institute of Mathematics, Novosibirsk 630090, Russia. E-mail: krotov@math.nsc.ru}%
}
\begin{document}

\maketitle

\begin{abstract}
A pair $\{T_0,T_1\}$ of disjoint collections of $k$-subsets (blocks) of a set 
$V$ of cardinality $v$ is called a $t$-$(v,k)$ trade or simply  a $t$-trade if 
every $t$-subset of $V$ is included in the same number of blocks of $T_0$ and 
$T_1$. The cardinality of $T_0$ is called the volume of the trade. Using the 
weight distribution of the Reed--Muller code, we prove the conjecture that for 
every $i$ from $2$ to $t$, there are no $t$-trades of volume greater than 
$2^{t+1}-2^i$ and less than $2^{t+1} - 2^{i-1}$ and derive restrictions on the 
$t$-trade volumes that are less than $2^{t+1} + 2^{t-1}$.
\end{abstract}
\section{Introduction}
Trades reflect possible differences between two combinatorial designs:
if $D'$ and $D''$ are $t$-$(v,k,\lambda)$ designs, then
the pair $\{ D'\backslash D'',D''\backslash D' \}$ 
is a $t$-trade. If there are no $t$-trades of volume $s$, then, of course, 
there are no two $t$-$(v,k,\lambda)$ designs differing in exactly $s$ blocks.
This gives one of the motivations to study 
the spectrum of volumes of $t$-trades,
besides that the problem of determining 
the possible sizes 
of combinatorial objects 
from some studied class 
is very natural itself.
The smallest volume $2^t$ of a $t$-$(v,k)$ trade was determined independently in 
\cite{Hwang:PhD,Hwang:86} and \cite{FranklPach:83}. 
In \cite{MahSol:1992},
it was proved that the second smallest volume is $2^t+2^{t-1}$ (the nonexistence of trades of some partial volumes between $2^t$ and $2^t+2^{t-1}$ was proved in \cite{Hwang:86} and \cite{Malik:88}).
In \cite{GrayRam:98}, the existence of $t$-trades of volume $2^{t+1}-2^i$
was shown 
for every $i$ from $0$ to $t$.
It was conjectured by Mahmoodian and Soltankhah \cite{Soltankhah:88,MahSol:1992}
and by Khosrovshahi and Malik \cite{Khos:1990:trades,KMT:1999:trades} 
that 
there are no other volumes of $t$-trades less than $2^{t+1}$.
This was proved in \cite{AsgSol:2009} for Steiner $t$-trades,
that is, with the additional restriction that every $t$-subset is included in at most one block of $T_0$ ($T_1$)
(the nonexistence of Steiner $t$-trades of volume greater than $2^t+2^{t-1}$ and less than $2^t+2^{t-2}$ was shown earlier in \cite{HoorKhos:2005}).
The nonexistence of trades for some partial values of the volume, supporting the conjecture, was considered in \cite{GrayRam:98:vkt}.
The opposite pole of the  volume spectrum of trades also
attracts attention of researchers: the well-known halving conjecture \cite{Hartman:halving} can be considered 
as a partial case of the problem of determining the maximum volume of a $t$-$(v,k)$ trade
(this maximum is conjectured to be $\frac12{v\choose k}$ whenever ${v-i\choose k-i}$ is even for all $i$ from $0$ to $t$).

The goal of the current paper is to prove the conjecture on small volumes for all simple (without repetitions of blocks) $t$-trades.
We do this
by showing that the Boolean characteristic function of every $t$-$(v,k)$ trade,
considered as a function over the $v$-cube,
belongs to the Reed--Muller code $\mathcal{RM}(v-t-1,v)$ 
and utilizing the known facts \cite{BerSlo69:RM} about the weight distribution of this code.
The more advanced study \cite{KTA:76} of the weight distribution of the Reed--Muller codes
allows to derive restrictions on the volume of a $t$-trade if this volume is less than $2.5\cdot 2^t$,
which is also reflected in the main theorem of this paper.

Similar approach was applied in \cite{Pot12:spectra} 
to derive restrictions on small cardinalities of
switching components of objects in the whole $v$-cube: 
perfect codes, equitable partitions, 
correlation immune functions, bent functions.
We also represent a $t$-trade as a pair of subsets of the $v$-cube;
moreover, we consider more general class of trades in the $v$-cube, 
which can be considered as trades of $\{1,\ldots,t\}$-designs in the Hamming scheme, 
in the sense of \cite[Sect.~3.4]{Delsarte:1973}.

Section~\ref{s:def} contains the definitions.
In Section~\ref{s:properties}, 
we consider some simple properties of the defined concepts and relations between them.
In Section~\ref{s:th}, we formulate and prove the main result of the paper.
In Section~\ref{s:struct}, we discuss the structure of small (of volume less than $2^{t+1}$) $t$-trades. 
We conclude that the hypothetical description of small $t$-trades can be considered as 
the characterization of a subclass of the class of so-called $t$-unitrades. 
While the small $t$-unitrades are characterized by the result of \cite{KasamiTokura:70},
the classification of that subclass will probably be more complicated.

\section{Definitions}\label{s:def}
Let $V=\{a_1,\ldots,a_v\}$ be a finite set 
of cardinality $v$; for simplicity we assume that $a_i=i$.
The subsets of $V$ will be associated with their characteristic $v$-tuples, 
e.g., $\{2,4,5\}=(0,1,0,1,1,0,0)=0101100$ for $v=7$.
The cardinality of a subset (the number of ones in the corresponding tuple) 
will be referred to as its \emph{size}.
The set of all subsets of $V$ is denoted by $2^V$,
while ${V}\choose k$ stands for the set of subsets of cardinality $k$.

The set $2^V$ is referred to as the \emph{$v$-cube}.
In the natural way, the \emph{$v$-cube} 
is considered as a $v$-dimensional vector space over the finite field $\mathbb F_2$ of order $2$.
Given $n \in \{0,\ldots,v\}$, an \emph{$n$-subcube} is the set
$\{(x_1,x_2,\ldots,x_v)\in 2^V\,:\, x_{l_i}=b_i, i=1,\ldots,v-n\}$
for some distinct coordinate numbers $l_1$, \ldots, $l_{v-n}$ from $\{1,\ldots,v\}$ 
and some constants $b_1$, \ldots, $b_{v-n}$ from $\{0,1\}$.
So, any $n$-subcube is an affine subspace obtained as a translation of the 
linear span of some $n$ elements of the standard basis 
$\{1\}$, \ldots, $\{v\}$.

By a \emph{$[t]$-trade}, where 
$[t] = \{0,1,\ldots,t\}$, 
we will mean a pair
$\{T_0,T_1\}$ of disjoint subsets of $2^V$ 
such that for every $i$ from $[t]$ every $i$-subset $\bar s$ of $V$ 
is included in the same number of subsets from $T_0$ and from $T_1$:
\begin{equation}\label{eq:trade}
  |\{\bar x \in T_0 \,:\, \bar s\subseteq \bar x\}|=
|\{\bar x \in T_1 \,:\, \bar s\subseteq \bar x\}|.  
\end{equation}
$T_0$ and $T_1$ are called the \emph{legs} of the trade,
their elements are referred to as \emph{blocks}, 
and the cardinality $|T_0|=|T_1|$ 
(the equality follows from (\ref{eq:trade}) with $\bar s = \emptyset$)
is known as the \emph{volume} of the trade.

A partial case of $[t]$-trades is the so-called \emph{$t$-$(v,k)$-trades}, 
where $t<k<v$, which are the $[t]$-trades $\{ T_0,T_1 \}$ with 
$T_0,T_1 \subset {{V}\choose k}$
(for this classical type of trades, 
the condition (\ref{eq:trade}) 
for all subsets $\bar s$ of size less than $t$ 
follows from that for all $t$-subsets).

\section{Properties of the $[t]$-trades}\label{s:properties}

The following four statements help to understand the definition of the $[t]$-trades 
and their connection with the classical $t$-$(v,k)$ trades.
That statements are not used in the proof of the main result,
but Corollary~\ref{c:t-to-tkv} demonstrates that any example of a $[t]$-trade can be 
``lifted'' to an example of a $t$-$(v,k)$ trade, which is a general important fact.

\begin{lemma}\label{l:subcubes}
The pair $\{ T_0,T_1 \}$ of two disjoint subsets of $2^V$ is a $[t]$-trade 
if and only if every $(v-t)$-subcube 
contains the same number of elements from $T_0$ and from $T_1$.
\end{lemma}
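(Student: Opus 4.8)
The plan is to translate the combinatorial condition (\ref{eq:trade}) into the geometry of subcubes and then move between the two formulations by inclusion--exclusion. The starting point is a dictionary: for an $i$-subset $\bar s$ with support $S$ (so $|S|=i$), the collection $\{\bar x : \bar s\subseteq\bar x\}$ is exactly the $(v-i)$-subcube in which the coordinates indexed by $S$ are fixed to $1$ while the remaining $v-i$ coordinates are free. Hence (\ref{eq:trade}) asserts precisely that every such \emph{all-ones} subcube, of dimension $v-i$ with $i\le t$, meets $T_0$ and $T_1$ in equally many blocks. Since the lemma is an equivalence, I would prove both implications, the task being to show that this restricted balance condition is the same as demanding balance for every $(v-t)$-subcube, where now the $t$ fixed coordinates may carry arbitrary values in $\{0,1\}$.

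For the direction $\Leftarrow$ I would assume every $(v-t)$-subcube is balanced and check (\ref{eq:trade}) for an arbitrary $i$-subset $\bar s$ with $i\le t$. By the dictionary, the associated $(v-i)$-subcube partitions into $2^{t-i}$ disjoint $(v-t)$-subcubes, obtained by fixing $t-i$ of its free coordinates to all possible values. Each piece is balanced by hypothesis, so summing over the pieces gives the required equality of the $T_0$- and $T_1$-counts. This step is routine.

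The direction $\Rightarrow$ is the main obstacle, because the trade condition controls only subcubes whose fixed coordinates are all set to $1$, whereas a general $(v-t)$-subcube $C$ fixes a $t$-element coordinate set $L$ to arbitrary values; write $P$ for the positions of $L$ set to $1$ and $N=L\setminus P$ for those set to $0$. Regarding each block $\bar x$ as the subset it represents, membership in $C$ means $P\subseteq\bar x$ and $\bar x\cap N=\emptyset$. I would resolve the zero-constraints on $N$ by inclusion--exclusion, $\mathbf{1}[\bar x\cap N=\emptyset]=\sum_{B\subseteq N}(-1)^{|B|}\mathbf{1}[B\subseteq\bar x]$. Setting $g(A):=|\{\bar x\in T_0:A\subseteq\bar x\}|-|\{\bar x\in T_1:A\subseteq\bar x\}|$, this expands the relevant difference of counts as $|C\cap T_0|-|C\cap T_1|=\sum_{B\subseteq N}(-1)^{|B|}g(P\cup B)$, where each index set $A=P\cup B$ satisfies $|A|=|P|+|B|\le|P|+|N|=t$ since $P\cap N=\emptyset$. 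Every such $g(P\cup B)$ vanishes by (\ref{eq:trade}), which is crucially imposed for all sizes up to $t$ and not merely for size $t$; therefore $|C\cap T_0|=|C\cap T_1|$, and the equivalence follows.
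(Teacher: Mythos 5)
Your proof is correct. The $\Leftarrow$ direction coincides with the paper's ``if'' part: the paper likewise passes from $(v-t)$-subcubes to the larger all-ones subcubes by the partition-and-sum argument. For the $\Rightarrow$ direction the paper argues differently in form but not in substance: it proves the stronger intermediate claim that \emph{every} $(v-s)$-subcube, $s\le t$, is balanced, by induction on the number of zeros among the fixed values, using the identity $S^{b_1,\ldots,b_{s-1},0}_{l_1,\ldots,l_s}=S^{b_1,\ldots,b_{s-1}}_{l_1,\ldots,l_{s-1}}\backslash S^{b_1,\ldots,b_{s-1},1}_{l_1,\ldots,l_s}$ to trade one zero constraint for a set difference of subcubes with fewer zeros. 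Your inclusion--exclusion formula $|C\cap T_0|-|C\cap T_1|=\sum_{B\subseteq N}(-1)^{|B|}g(P\cup B)$ is exactly this induction unrolled into closed form (one application of $\mathbf{1}[x_l=0]=1-\mathbf{1}[x_l=1]$ per zero-position), so the two proofs use the same underlying decomposition. The trade-off is minor: the paper's recursion keeps notation light and delivers balance of all intermediate-dimension subcubes as a by-product, while your explicit signed sum makes completely transparent which trade equations are invoked --- namely $g(P\cup B)=0$ for all $B\subseteq N$, with $|P\cup B|$ ranging over all values up to $t$ --- and hence why the hypothesis for all sizes $i\le t$, not merely $i=t$, is essential; the paper uses this same fact slightly less visibly, in the base cases of its induction.
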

\begin{proof}
\emph{Only if}.
Let $s\le t$; consider a $(v-s)$-subcube
$S^{b_1,...,b_s}_{l_1,...,l_s}=\{(x_1,x_2,\ldots,x_v)\in 2^V\,:\, x_{l_i}=b_i, i=1,\ldots,s\}$.
We prove by induction on the number of zeros among  $b_1$, \ldots, $b_s$ that
\begin{equation}\label{eq:scubes}
 |S \cap T_0 | = |S \cap T_1|.
\end{equation}
If there are no zeros, then (\ref{eq:scubes}) is straightforward from the definition of a trade.
If there is a zero, we can assume that $b_s=0$.
Then, 
$$S^{b_1,...,b_s}_{l_1,...,l_s} = S^{b_1,...,b_{s-1},0}_{l_1,...,l_{s-1},l_s} 
= S^{b_1,...,b_{s-1}}_{l_1,...,l_{s-1}}\backslash S^{b_1,...,b_{s-1},1}_{l_1,...,l_{s-1},l_s}, $$
and (\ref{eq:scubes}) follows from the induction assumption.

\emph{If}. If (\ref{eq:scubes}) holds for all $(v-t)$-subcubes, 
then it readily holds for the $(v-s)$-subcubes for all $s\le t$. 
Then, $\{ T_0,T_1 \}$ is a trade by definition.
\end{proof}

\begin{corollary}\label{c:transl}
 Every translation of a $[t]$-trade obtained by adding the same vector to every block is also a $[t]$-trade. 
\end{corollary}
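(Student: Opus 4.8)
The plan is to reduce everything to the subcube characterization furnished by Lemma~\ref{l:subcubes}. Write $\bar c$ for the fixed vector being added, and let $T_0+\bar c$ and $T_1+\bar c$ denote the translated legs. First I would record that translation by $\bar c$ is an involutive bijection of the $v$-cube $\mathbb F_2^v$ onto itself (since $\bar c+\bar c=\bar 0$), so in particular it preserves disjointness: from $T_0\cap T_1=\emptyset$ we get $(T_0+\bar c)\cap(T_1+\bar c)=\emptyset$, and the translated pair is again a legitimate pair of disjoint subsets of $2^V$.

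The key step is to observe that translation sends $(v-t)$-subcubes to $(v-t)$-subcubes. Indeed, if $S=\{\bar x\in 2^V : x_{l_i}=b_i,\ i=1,\dots,t\}$ is a $(v-t)$-subcube, then $S+\bar c=\{\bar x\in 2^V : x_{l_i}=b_i+c_{l_i},\ i=1,\dots,t\}$ is again a $(v-t)$-subcube, since only the fixed values $b_i$ are altered while the constrained coordinates $l_1,\dots,l_t$ are unchanged. Using that translation is an involution, for every such $S$ one has $\bar x+\bar c\in S$ exactly when $\bar x\in S+\bar c$, whence $|(T_0+\bar c)\cap S|=|T_0\cap(S+\bar c)|$ and likewise with $T_1$ in place of $T_0$.

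I would then invoke the \emph{only if} direction of Lemma~\ref{l:subcubes} for the original trade $\{T_0,T_1\}$, applied to the $(v-t)$-subcube $S+\bar c$, to obtain $|T_0\cap(S+\bar c)|=|T_1\cap(S+\bar c)|$. Combining this with the two displays from the previous paragraph gives $|(T_0+\bar c)\cap S|=|(T_1+\bar c)\cap S|$ for every $(v-t)$-subcube $S$. Applying the \emph{if} direction of Lemma~\ref{l:subcubes} to the translated pair then shows that $\{T_0+\bar c,\,T_1+\bar c\}$ is a $[t]$-trade, which is the claim.

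I do not expect a genuine obstacle here: the statement is essentially the translation-invariance of the subcube counting condition. The only points that require care are confirming that translation is a dimension-preserving bijection on the set of $(v-t)$-subcubes and that it is an involution, so that it is the forward image $S+\bar c$ (rather than a preimage) that appears in the counting identity; both facts are immediate from working in $\mathbb F_2^v$.
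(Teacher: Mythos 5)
Your proof is correct and follows exactly the route the paper intends: the statement appears as an immediate corollary of Lemma~\ref{l:subcubes}, since translation by a fixed vector is a bijection of the $v$-cube mapping $(v-t)$-subcubes to $(v-t)$-subcubes and hence preserves the equal-intersection condition. Your write-up merely makes explicit (via the involution property and the identity $|(T_0+\bar c)\cap S|=|T_0\cap(S+\bar c)|$) the details the paper leaves to the reader.
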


\begin{lemma}\label{l:dbl}
Assume that 
the pair $\{ T_0,T_1 \}$ 
of two subsets of $2^V$ 
is a $[t]$-trade and $i\in\{1,\ldots,v\}$.
Then the pair $\{  T'_0, T'_1 \}$ 
obtained from $\{ T_0,T_1 \}$ by replacing
every block $(x_1,\ldots,x_v)$ by $(x_1,\ldots,x_v,x_i)$ is a $[t]$-trade.
\end{lemma}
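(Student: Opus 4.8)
The plan is to verify the two defining requirements of a $[t]$-trade for the pair $\{T_0',T_1'\}$, now living over the enlarged ground set $V\cup\{v+1\}$ of cardinality $v+1$: disjointness, and the balance condition~(\ref{eq:trade}) for every $r$-subset with $r\le t$. Writing $\phi$ for the duplication map $\phi(x_1,\ldots,x_v)=(x_1,\ldots,x_v,x_i)$, so that $T_j'=\phi(T_j)$, disjointness is immediate: $\phi$ is injective, hence $T_0'\cap T_1'=\phi(T_0\cap T_1)=\emptyset$.

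For the balance condition I would work directly from the definition. Fix $r\le t$ and an $r$-subset $\bar s$ of $V\cup\{v+1\}$, and count the blocks $\bar x'=\phi(\bar x)$ of $T_j'$ with $\bar s\subseteq\bar x'$. The idea is that every such block has its last coordinate equal to its $i$-th coordinate, so the containment $\bar s\subseteq\phi(\bar x)$ can be rewritten as a containment $\bar s_0\subseteq\bar x$ for a suitable subset $\bar s_0\subseteq V$ of size at most $t$; the desired equality of counts then follows by applying~(\ref{eq:trade}) to the original trade $\{T_0,T_1\}$ with the subset $\bar s_0$.

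The translation splits into cases according to how $\bar s$ meets the two coupled coordinates $i$ and $v+1$. If $v+1\notin\bar s$, take $\bar s_0=\bar s$, of size $r\le t$. If both $v+1\in\bar s$ and $i\in\bar s$, then, since $x_{v+1}=x_i$ in every block, the requirements $x_{v+1}=1$ and $x_i=1$ coincide, so $\bar s\subseteq\phi(\bar x)$ is equivalent to $\bar s_0\subseteq\bar x$ with $\bar s_0=\bar s\setminus\{v+1\}$, of size $r-1$. If $v+1\in\bar s$ but $i\notin\bar s$, the constraint $x_{v+1}=1$ becomes $x_i=1$, so $\bar s\subseteq\phi(\bar x)$ is equivalent to $\bar s_0\subseteq\bar x$ with $\bar s_0=(\bar s\setminus\{v+1\})\cup\{i\}$, again of size $r\le t$. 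In every case $\bar s_0\subseteq V$ and $|\bar s_0|\le t$, so~(\ref{eq:trade}) for $\{T_0,T_1\}$ yields the required equality.

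The step to watch is the case in which $\bar s$ contains both coupled coordinates: there the associated subset $\bar s_0$ loses an element, and one must be sure the two separate containment requirements genuinely collapse into one — which is exactly the point where the equality $x_{v+1}=x_i$ enforced by $\phi$ is used. (Equivalently, one could route the argument through Lemma~\ref{l:subcubes}: a $(v+1-t)$-subcube of the new cube meets the hyperplane $x_i=x_{v+1}$ either in the empty set or in a set that projects bijectively onto a subcube of $2^V$ of dimension at least $v-t$, and equality of the $T_0$- and $T_1$-counts on $(v-t)$-subcubes propagates to all larger subcubes. The only delicate point in that route is that fixing $i$ and $v+1$ to unequal values gives an empty intersection, which is harmless.)
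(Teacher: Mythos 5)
Your proof is correct and is essentially the paper's own argument: the paper reduces the containment $\bar s\subseteq\phi(\bar x)$ to a containment $\bar z'\subseteq\bar x$ via the single formula $\bar z'=(z_1\vee z_{v+1},z_2,\ldots,z_v)$ (taking $i=1$ without loss of generality), which encodes exactly your three-case analysis of how $\bar s$ meets the coupled coordinates $i$ and $v+1$, and then invokes (\ref{eq:trade}) since $|\bar z'|\le|\bar z|\le t$. Your explicit treatment of disjointness via injectivity of $\phi$ is a small addition the paper leaves tacit; otherwise the two arguments coincide.
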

\begin{proof}
Without loss of generality we assume that $i=1$. 
A set  $\bar z=(z_1,\ldots,z_v,z_{v+1})$
is included in a block $(x_1,\ldots,x_v,x_1)$
if and only if 
$\bar z' = (z_1 \vee z_{v+1},z_2,\ldots,z_v)$,
where $z_1\vee z_{v+1} = z_{1}+z_{v+1}+z_{1}z_{v+1}$, 
is included in $(x_1,\ldots,x_v)$.
If $|\bar z|\le t$, then obviously $|\bar z'|\le t$;
so, $\{ T'_0, T'_1 \}$ being a $[t]$-trade follows from that of $\{ T_0,T_1 \}$, by the definition.
\end{proof}

\begin{corollary}\label{c:t-to-tkv}
Assume that 
the pair $\{ T_0,T_1 \}$ 
of two subsets of $2^V$ 
is a $[t]$-trade.
Then the pair $\{ \hat T_0,\hat T_1 \}$ 
formed from $\{ T_0,T_1 \}$ by replacing
every block $(x_1,\ldots,x_v)$ 
by $(x_1,\ldots,x_v,x_1+1,\ldots,x_v+1)$ is a $t$-$(2v,k)$ trade, where $k=v$.
\end{corollary}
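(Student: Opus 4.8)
The plan is to realize the map $(x_1,\ldots,x_v)\mapsto(x_1,\ldots,x_v,x_1+1,\ldots,x_v+1)$ as a composition of operations already known to preserve the $[t]$-trade property, and then to check that the resulting blocks have constant size. Specifically, I would first duplicate each original coordinate using Lemma~\ref{l:dbl}, obtaining the ``doubled'' blocks $(x_1,\ldots,x_v,x_1,\ldots,x_v)$, and then flip the appended half by a translation via Corollary~\ref{c:transl}.

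For the duplication step I apply Lemma~\ref{l:dbl} exactly $v$ times. Appending a copy of coordinate $1$ turns $\{T_0,T_1\}$ into a $[t]$-trade in the $(v+1)$-cube with blocks $(x_1,\ldots,x_v,x_1)$; appending a copy of coordinate $2$ gives blocks $(x_1,\ldots,x_v,x_1,x_2)$ in the $(v+2)$-cube; and so on up to coordinate $v$. Because the lemma never alters the original coordinates $1,\ldots,v$ (it only appends a new one at the end), at the $j$-th step the index $j$ still points to the original value $x_j$, so after all $v$ steps I obtain a $[t]$-trade whose blocks are exactly $(x_1,\ldots,x_v,x_1,\ldots,x_v)$ in the $2v$-cube.

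Next I apply Corollary~\ref{c:transl} with the translation vector having zeros on the first $v$ coordinates and ones on the last $v$. Adding this vector flips precisely the duplicated half, sending each block $(x_1,\ldots,x_v,x_1,\ldots,x_v)$ to $(x_1,\ldots,x_v,x_1+1,\ldots,x_v+1)$; the result is the pair $\{\hat T_0,\hat T_1\}$, which is a $[t]$-trade by the corollary. Finally, in every such block the coordinates $i$ and $v+i$ together contribute exactly one $1$, since $x_i$ and $x_i+1$ are complementary over $\mathbb{F}_2$; hence each block has size exactly $v$, so $\hat T_0,\hat T_1\subset{V'\choose v}$ on the $2v$-element ground set $V'$, and $\{\hat T_0,\hat T_1\}$ is by definition a $t$-$(2v,v)$ trade.

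Since each appending map is injective on the first $v$ coordinates and each translation is a bijection of the cube, disjointness of the two legs and the absence of repeated blocks are inherited automatically at every stage, so no separate verification is needed there. The only point demanding attention --- the mild ``main obstacle'' --- is the index bookkeeping in the iterated use of Lemma~\ref{l:dbl}: one must confirm that duplicating the coordinates one at a time reproduces the fully doubled block, which holds precisely because newly appended coordinates never shift the positions of the original ones.
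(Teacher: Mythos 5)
Your proposal is correct and follows essentially the same route as the paper: apply Lemma~\ref{l:dbl} $v$ times for $i=1,\ldots,v$ to obtain the doubled blocks $(x_1,\ldots,x_v,x_1,\ldots,x_v)$, then translate by the vector with ones in the last $v$ coordinates via Corollary~\ref{c:transl}, and observe that every resulting block has size exactly $v$. Your extra care with the index bookkeeping and the explicit complementarity argument for the block sizes are details the paper leaves implicit, but the substance is identical.
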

\begin{proof}
Applying Lemma~\ref{l:dbl}, $v$ times for $i=1,2,\ldots,v$, we see that  
$\{ \tilde T_0,\tilde T_1 \}$ 
obtained from $\{ T_0,T_1 \}$ by doubling
$(x_1,\ldots,x_v,x_1,\ldots,x_v)$
every block $v$-tuple $(x_1,\ldots,x_v)$ is a $[t]$-trade.

Then,  $\{ \hat T_0,\hat T_1 \}$ is a $[t]$-trade by Corollary~\ref{c:transl}.
Since all blocks of  $\{ \hat T_0,\hat T_1 \}$ has the same size $v$ 
(the number of ones in the corresponding $2v$-tuple), 
it is a $t$-$(2v,v)$ trade.
\end{proof}

\section{The main result} \label{s:th}
\begin{theorem}\label{th:gaps}
If the volume of a $[t]$-trade is less than 
$2^{t+1}+2^{t-1}$, then it has one of the following forms:
\begin{enumerate}
\item[\rm (1)] 
 $2^{t+1}-2^{i}, \qquad i\in \{0,\ldots,t+1\}$,
\item[\rm (2)] 
$2^{t+1}+2^{i}, \qquad i\in \{\lceil\frac{t-1}2\rceil,\ldots,t-2\}$,
\item[\rm (3)] 
$2^{t+1}+2^{t-1}-2^i, \qquad i\in \{0,\ldots,t-1\}$,
\item[\rm (4)] 
$2^{t+1}+2^{t-1}-3\cdot 2^i, \qquad i\in \{0,\ldots,t-3\}$.
\end{enumerate}
\end{theorem}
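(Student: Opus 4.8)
The plan is to translate the combinatorial problem into a question about low-weight codewords of a Reed--Muller code and then read off the answer from the known weight enumeration. Write $T := T_0 \cup T_1$ (a disjoint union) and let $\mathbf 1_T \in \mathbb F_2^{2^V}$ be its characteristic function; since $|T_0|=|T_1|$ equals the volume, the Hamming weight of $\mathbf 1_T$ is exactly twice the volume. First I would show that $\mathbf 1_T$ lies in $\mathcal{RM}(v-t-1,v)$. Because $\mathcal{RM}(v-t-1,v)^\perp=\mathcal{RM}(t,v)$, which is spanned by the monomials $\prod_{j\in\bar s}x_j$ of degree $|\bar s|\le t$, it suffices to check orthogonality against each such monomial. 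The support of $\prod_{j\in\bar s}x_j$ is the subcube $\{x:\bar s\subseteq x\}$, so the relevant inner product is $|\{x\in T:\bar s\subseteq x\}|$ modulo $2$; by the defining equation~(\ref{eq:trade}) this count equals $2\,|\{x\in T_0:\bar s\subseteq x\}|$ and is therefore even. Equivalently, one may argue through Lemma~\ref{l:subcubes}: every $(v-t)$-subcube, and hence every subcube of larger dimension after partitioning it into $(v-t)$-subcubes, meets $T$ in an even number of points, which is precisely orthogonality to $\mathcal{RM}(t,v)$. Thus every $[t]$-trade yields a codeword of $\mathcal{RM}(v-t-1,v)$ whose weight is twice its volume, and the volume spectrum below a given bound is contained in half the weight spectrum of this code below twice that bound.

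With this dictionary in hand, the statement becomes purely coding-theoretic. The minimum distance of $\mathcal{RM}(v-t-1,v)$ is $2^{v-(v-t-1)}=2^{t+1}$, so the smallest nonzero weight is $2^{t+1}$ and the smallest volume is $2^t$, recovering the classical result of \cite{FranklPach:83,Hwang:86}. The bound $2^{t+1}+2^{t-1}$ on the volume corresponds to the weight bound $2^{t+2}+2^t=\frac52\cdot 2^{t+1}$, i.e. to weights strictly below $2.5$ times the minimum distance $d=2^{t+1}$. The next step is to invoke the classification of low-weight Reed--Muller codewords: every weight in the range $[d,2d)$ is of the form $2d-2^{j}$ with $1\le j\le t+1$ by Kasami--Tokura \cite{KasamiTokura:70} (equivalently by the classical enumeration in \cite{BerSlo69:RM}), while the weights in $[2d,\frac52 d)$ are those enumerated by the refinement of Kasami, Tokura and Azumi \cite{KTA:76}. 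Note that for the theorem, which is a necessary condition, I need only the ``only these weights can occur'' direction, not that every listed value is realised for the given $v$.

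The remaining work is bookkeeping: halving each admissible weight and sorting the results into the four families. The weights $2d-2^{j}=2^{t+2}-2^{j}$ with $1\le j\le t+1$ give exactly the volumes $2^{t+1}-2^{i}$ of form~(1) (with $i=j-1\in\{0,\dots,t\}$, the value $i=t+1$ corresponding to the trivial empty trade), so the entire range below $2^{t+1}$ is covered by form~(1); this already settles the small-volume conjecture. The weights supplied by \cite{KTA:76} in $[2d,\frac52 d)$ then unfold into the volumes $2^{t+1}+2^i$, $2^{t+1}+2^{t-1}-2^i$, and $2^{t+1}+2^{t-1}-3\cdot2^i$ of forms~(2)--(4); in particular the borderline weight $2d$ reappears as the volume $2^{t+1}$, namely form~(3) with $i=t-1$.

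I expect the delicate part to be not the embedding but the accurate transcription of the Kasami--Tokura--Azumi list. Their classification describes each low-weight codeword by an affine-equivalence normal form in a bounded number of variables, and one must verify that the resulting set of attainable weights depends only on $m-r=t+1$ (so that the statement is uniform in the ambient dimension $v$) and then extract the precise index ranges. The constraint $i\in\{\lceil\frac{t-1}2\rceil,\dots,t-2\}$ in form~(2) is exactly such a range condition: its lower end reflects how many pairs of variables a quadratic part must involve in order to realise that weight, while its upper end keeps the volume below $2^{t+1}+2^{t-1}$. Checking these endpoints carefully, together with the degenerate small-$t$ cases in which some families are empty, is where the real care is needed.
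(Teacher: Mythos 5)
Your proposal is correct and follows essentially the same route as the paper: it embeds the trade into $\mathcal{RM}(v-t-1,v)$ by checking orthogonality of $\chi_{T_0\cup T_1}$ to all monomials of degree at most $t$ (using that the counts over $T_0$ and $T_1$ coincide, so the total is even), then reads off the admissible weights from the Berlekamp--Sloane list below $2\cdot 2^{t+1}$ and the Kasami--Tokura--Azumi list below $2.5\cdot 2^{t+1}$, and halves them. Your transcription of the index ranges, including the boundary weight $2\cdot 2^{t+1}$ reappearing as form (3) with $i=t-1$ and the degenerate value $i=t+1$ in form (1), matches the paper's conclusion.
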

So, in particular, the smallest $t+1$ non-zero volumes of $[t]$-trades
are 
$2^t$, 
$2^t(2-\frac 12)$, 
$2^t(2-\frac 1{2^2})$, \ldots,  
$2^t(2-\frac 1{2^t}) = 2^{t+1} - 1$ 
(trades of these volumes are known to exist, see \cite{GrayRam:98}).
The next volume is $2^{t+1}$ (case (3), $i=t-1$); 
a trade of such volume can be constructed as the union of two disjoint trades of volume $2^t$.
The existence of trades of other volumes from cases (2), (3), (4) remains unknown in general
(some partial values are known, e.g., there are $2$-trades of volume $9$ \cite{FGG:2004:trades}).
The smallest hypothetical volume of a $t$-trade is $2^{t+1}+2^{\lceil\frac{t-1}2\rceil}$ if $t\ge 5$ (case (2)),
and $2^{t+1}+2^{t-3}$ if $t\in\{3,4,5,6\}$ (case (4)).

\begin{proof}
A function over $2^V$ with values from $\mathbb F_2$
is called a Boolean \emph{function}.
The \emph{weight} of a Boolean function is the number
of its nonzeros (ones).
Two Boolean functions $f$ and $g$ are \emph{orthogonal} if 
$\sum_{\bar x \in 2^V}f(\bar x)g(\bar x) =0$, 
i.e., the number of their common ones is even.


The set of all Boolean functions $f(x_1,\ldots,x_v)$ 
representable as a polynomial of degree at most $r$ in $x_1$, \ldots, $x_v$ is known
as the \emph{Reed--Muller code} of order $r$, $\mathcal{RM}(r,v)$.
The codes
$\mathcal{RM}(r,v)$ and $\mathcal{RM}(v-r-1,v)$ are dual to each other:
$\mathcal{RM}(v-r-1,v)$ consists of all Boolean functions 
that are orthogonal to every function from $\mathcal{RM}(r,v)$, and vice versa,
see e.g. \cite{MWS}.

The following two lemmas summarizes the results from \cite{BerSlo69:RM} and \cite{KTA:76} concerning the possible weights of
the functions from $\mathcal{RM}(r,v)$ from the interval $(0,2.5\cdot 2^{v-r})$.
\begin{lemma}[\cite{BerSlo69:RM}] \label{l:BS} The $v-r$ smallest nonzero weights of the Boolean functions from
$\mathcal{RM}(r,v)$ are
\begin{enumerate}
\item[\rm (1)] 
$2\cdot 2^{v-r}-2^{v-r}$, \
$2\cdot 2^{v-r}-2^{v-r-1}$, \ 
\ldots, \
$2\cdot 2^{v-r}-2$ .
\end{enumerate}
\end{lemma}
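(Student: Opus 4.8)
The plan is to separate the statement into three independent pieces: the value of the smallest nonzero weight, the realizability of each weight in the list, and the \emph{gap} claim that no nonzero weight strictly below $2\cdot 2^{v-r}$ other than those listed can occur. The engine throughout is the recursive $(u\mid u+w)$ description of the Reed--Muller codes. Given $f\in\mathcal{RM}(r,v)$, split on the last variable: set $u=f|_{x_v=0}\in\mathcal{RM}(r,v-1)$ and let $w=f|_{x_v=0}+f|_{x_v=1}$ be the discrete derivative, which has degree at most $r-1$, so $w\in\mathcal{RM}(r-1,v-1)$. Then $\mathrm{wt}(f)=\mathrm{wt}(u)+\mathrm{wt}(u+w)$, and this identity is what I would iterate in an induction on $v$ that simultaneously treats all orders $r\le v$.

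The easy pieces first. For the minimum weight I would use the classical fact that a nonzero polynomial of degree at most $r$ over $\mathbb F_2$ has at least $2^{v-r}$ nonzeros, with equality exactly when $f=\mathbf 1_A$ is the indicator of a $(v-r)$-dimensional affine subspace $A$ (a product of $r$ independent affine forms). For realizability I would exhibit explicit codewords: for each $\mu$ with $1\le\mu\le\min(r,v-r)$, the degree-$r$ function $x_1\cdots x_{r-\mu}\bigl(x_{r-\mu+1}\cdots x_r+x_{r+1}\cdots x_{r+\mu}\bigr)$ vanishes off the subcube $\{x_1=\cdots=x_{r-\mu}=1\}$, and a direct count there gives weight $2\cdot 2^{v-r}-2^{v-r-\mu+1}$; as $\mu$ runs over its range the exponent sweeps from $v-r$ down to $\max(1,v-2r+1)$, so when $v\le 2r$ this realizes the entire list and in the regime relevant to the paper ($r=v-t-1$ with $v$ large) it covers every entry.

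The substance is the gap claim, proved by induction on $v$ using the decomposition above. If $w=0$ then $f=(u,u)$ and $\mathrm{wt}(f)=2\,\mathrm{wt}(u)$ with $u\in\mathcal{RM}(r,v-1)$; the bound $\mathrm{wt}(f)<2\cdot 2^{v-r}$ forces $\mathrm{wt}(u)<2\cdot 2^{(v-1)-r}$, so the induction hypothesis pins $\mathrm{wt}(u)$ to the list for $\mathcal{RM}(r,v-1)$, and doubling keeps $\mathrm{wt}(f)$ inside the list for $\mathcal{RM}(r,v)$. The genuinely hard case is $w\ne 0$. Here $w\in\mathcal{RM}(r-1,v-1)$ gives $\mathrm{wt}(w)\ge 2^{v-r}$, and one has only the soft bounds $\mathrm{wt}(f)\ge \mathrm{wt}(w)$ and $\mathrm{wt}(f)\ge 2\,\mathrm{wt}(u)-\mathrm{wt}(w)$, which do not by themselves isolate the admissible weights. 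This interaction is the main obstacle. To get through it I would choose the splitting coordinate so as to minimize $\mathrm{wt}(w)$, argue that when $\mathrm{wt}(f)$ is small the derivative $w$ must itself be a \emph{minimum-weight} codeword of $\mathcal{RM}(r-1,v-1)$, i.e. an affine-subspace indicator, and then exploit that $u$ and $u+w$ agree off a thin affine slab to perform an invertible affine change of coordinates reducing $f$ to the form $x_1\cdots x_{r-\mu}\,g$ with $g$ a codeword of excess-minimum weight on fewer variables. Proving that this reduction terminates in exactly the listed weights, and disposing of the degenerate configurations, is the real work (this is the Kasami--Tokura structural analysis); once the normal form is in hand the weight bookkeeping is routine. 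I would close by reconciling the count: the list has $v-r$ entries, but when $2(v-r)>v$ its largest members are simply not attained, so the sharp statement that the induction actually delivers is that every nonzero weight below $2\cdot 2^{v-r}$ equals $2\cdot 2^{v-r}-2^{e}$ for some $e\in\{1,\ldots,v-r\}$ — which is all that the proof of the main theorem needs.
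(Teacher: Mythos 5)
The paper offers no proof of this lemma: it is imported verbatim from Berlekamp and Sloane \cite{BerSlo69:RM}, so the comparison is really with that original argument, which --- like your plan --- runs on the derivative decomposition $\mathrm{wt}(f)=\mathrm{wt}(u)+\mathrm{wt}(u+w)$, $u=f|_{x_v=0}\in\mathcal{RM}(r,v-1)$, $w\in\mathcal{RM}(r-1,v-1)$, with induction on $v$. Your peripheral pieces are sound: the minimum-weight fact is classical; the weight count $2\cdot 2^{v-r}-2^{v-r-\mu+1}$ for $x_1\cdots x_{r-\mu}(x_{r-\mu+1}\cdots x_r+x_{r+1}\cdots x_{r+\mu})$ is correct; the $w=0$ case (doubling a codeword of $\mathcal{RM}(r,v-1)$) is handled properly; and your closing caveat is a genuinely good observation --- as stated, the lemma's list is not fully attained when $v>2r$ (e.g.\ for $\mathcal{RM}(1,v)$, $v\ge 3$, the only nonzero weight below $2^v$ is $2^{v-1}$), so the correct self-contained statement is the gap claim that every nonzero weight below $2\cdot 2^{v-r}$ has the form $2\cdot 2^{v-r}-2^{e}$, which is indeed all that the proof of Theorem~\ref{th:gaps} uses.

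However, the heart of the lemma is the case $w\ne 0$, and there your text is a plan, not a proof. You assert that one can choose the splitting direction so that the derivative $w$ is a minimum-weight codeword of $\mathcal{RM}(r-1,v-1)$ and then reduce $f$ by an invertible affine substitution to the form $x_1\cdots x_{r-\mu}\,g$ --- and you yourself label this ``the real work (this is the Kasami--Tokura structural analysis)''. Neither claim is established: the soft bounds $\mathrm{wt}(f)\ge\mathrm{wt}(w)$ and $\mathrm{wt}(f)\ge 2\,\mathrm{wt}(u)-\mathrm{wt}(w)$ only force $\mathrm{wt}(w)<2\cdot 2^{v-r}$, which by the induction hypothesis leaves $w$ anywhere in the list (e.g.\ $\mathrm{wt}(w)=\frac{3}{2}\cdot 2^{v-r}$); proving that some direction yields a derivative that is zero or of minimum weight, and that the reduction terminates in exactly the listed weights, is precisely the delicate case analysis of \cite{BerSlo69:RM} and \cite{KasamiTokura:70}, and nothing in your sketch disposes of the bad configurations. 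So: right decomposition, right induction frame, correct easy cases and a valuable correction to the statement's literal form, but the decisive step is invoked rather than proved --- as a self-contained argument the proposal has a genuine gap, though it is an accurate roadmap to the literature proof that the paper cites in place of giving one.
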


\begin{lemma}[\cite{KTA:76}] \label{l:KTA}
The weights larger than 
$2\cdot 2^{v-r}$
and smaller than
$2.5\cdot 2^{v-r}$
of the Boolean functions from
$\mathcal{RM}(r,v)$ are:
\begin{enumerate}
\item[\rm (2)] $2\cdot 2^{v-r}+2\cdot 2^{v-r-l}$, \qquad $2\le l \le (v-r+2)/2$;
\item[\rm (3)] $2.5\cdot 2^{v-r} - 2\cdot 2^{i}$, \qquad $0\le i \le v-r-3$;
\item[\rm (4)] $2.5\cdot 2^{v-r} - 6\cdot 2^{i}$, \qquad $0\le i \le v-r-4$.
\end{enumerate}
\end{lemma}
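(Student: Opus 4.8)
The plan is to prove the weight statement by reducing it, via affine invariance, to a finite structural classification of the low-weight codewords and then reading off the weight of each representative. Write $d := 2^{v-r}$ for the minimum distance of $\mathcal{RM}(r,v)$. Since the affine group $\mathrm{AGL}(v,\mathbb{F}_2)$ acts on $2^V$ preserving both membership in $\mathcal{RM}(r,v)$ and the weight, it suffices to list, up to affine equivalence, all nonzero codewords of weight below $2.5\,d$ and to compute the weight of each normal form. The range $(0,2d)$ is already understood: Lemma~\ref{l:BS} gives the admissible weights there, and the corresponding codewords are, up to affine equivalence, the products $x_1\cdots x_{r-\mu}\,(x_{r-\mu+1}\cdots x_r + x_{r+1}\cdots x_{r+\mu})$, whose weight is $2d - 2\cdot 2^{v-r-\mu}$. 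So the real work is the band $(2d,2.5\,d)$, and the goal is to show that the representatives there are exactly the three families producing the weights listed in (2), (3), (4).

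The classification I would establish by induction on $r$ using the decomposition with respect to one coordinate: writing $f = g + x_v h$ with $g = f|_{x_v=0}\in\mathcal{RM}(r,v-1)$ and $h\in\mathcal{RM}(r-1,v-1)$, one has $\mathrm{wt}(f) = \mathrm{wt}(f|_{x_v=0}) + \mathrm{wt}(f|_{x_v=1})$, where $f|_{x_v=1} = g+h$. If $h=0$ for every choice of splitting coordinate then $f$ is constant, which is excluded; so I may assume some coordinate yields $h\neq 0$, and the degenerate case $g=0$ or $g+h=0$ only drops a dimension and is absorbed by induction. Both restrictions lie in $\mathcal{RM}(r,v-1)$, whose minimum weight is $d/2$; hence if $\mathrm{wt}(f)<2.5\,d$, each restriction has weight at most $2d$ and is therefore one of the already-classified weight-$\le 2d$ Kasami--Tokura forms in $v-1$ variables (this lower-threshold classification is the $2d$-analogue of what we are proving, obtained by the same technique). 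The inductive step then amounts to deciding which pairs of such forms can occur simultaneously as $f|_{x_v=0}$ and $f|_{x_v=1}$, subject to the constraint that their sum $h$ has degree at most $r-1$, and retaining only the pairs whose total weight stays below $2.5\,d$. Normalizing the splitting coordinate reduces each surviving case to a canonical product $x_1\cdots x_{r-s}\cdot\xi$ whose core $\xi$ on the remaining variables is either a sum of a bounded number of disjoint monomials or a product times a quadratic bent form.

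With the finite list of cores in hand, the weights follow by inclusion--exclusion over the supporting subcubes together with the weight formula $2^{2s-1}\pm 2^{s-1}$ for a bent quadratic in $2s$ variables. The factor $2.5$ is exactly the footprint of such a bent core: for instance $x_1\cdots x_{r-2}\,(1 + y_1 y_2 + y_3 y_4)$ lies in $\mathcal{RM}(r,v)$ and has weight $2^{v-r-2}\cdot 10 = 2.5\,d$, which anchors the analysis at the top of the band; the families (3) and (4) then arise by trimming the support of such an extremal form by one or two disjoint minimum-weight flats, producing the corrections $2\cdot 2^i$ and $6\cdot 2^i = 3\cdot 2^{i+1}$. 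The parameter ranges are dictated by two budgets: the degree budget $\deg f\le r$ forces at most $r-2$ linear factors in front of a quadratic core (whence the lower bound $l\ge 2$ in (2)), and the dimension budget records the number of free coordinates available to host the bent quadratic, respectively the disjoint minimum-weight pieces (whence the ceiling $l\le (v-r+2)/2$ and the cut-offs $i\le v-r-3$, $i\le v-r-4$).

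The main obstacle is the completeness of the classification, that is, the inductive gluing step: showing that no codeword of weight in $(2d,2.5\,d)$ escapes the three families. This is where the delicate case analysis lives, since two weight-$\le 2d$ restrictions can be superposed in many configurations, distinguished by the sizes of their supporting flats, by whether each core is a monomial sum or bent, and by the parity of their overlap; for each configuration one must verify both that $\deg h\le r-1$ is compatible and that the resulting weight lands inside or outside the target band. Controlling this superposition --- rather than the final weight computations, which are routine inclusion--exclusion --- is the technical heart of the argument and the reason the original Kasami--Tokura--Azumi proof is long. A clean writeup would organize the induction by the codimension $s$ of the largest flat dividing $f$ and descend on $s$, using at each stage the already-settled structure of the two coordinate restrictions.
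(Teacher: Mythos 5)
You should first be aware that the paper contains no proof of this statement at all: Lemma~\ref{l:KTA} is imported verbatim from Kasami, Tokura and Azumi \cite{KTA:76}, just as Lemma~\ref{l:BS} is imported from \cite{BerSlo69:RM}, and the paper's contribution lies elsewhere (the embedding of trades into $\mathcal{RM}(v-t-1,v)$). So the only meaningful comparison is against the original proof in \cite{KTA:76}, which is a long classification argument. Your proposal correctly identifies the general shape of that argument (affine normal forms, the coordinate decomposition $f=g+x_vh$ with $\mathrm{wt}(f)=\mathrm{wt}(g)+\mathrm{wt}(g+h)$, bent quadratic cores anchoring the value $2.5\,d$), and your weight computations for the representatives, e.g.\ $\mathrm{wt}\bigl(x_1\cdots x_{r-2}(1+y_1y_2+y_3y_4)\bigr)=10\cdot 2^{v-r-2}=2.5\,d$, are sound. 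But the proposal is a plan, not a proof: you explicitly defer the completeness of the classification in the band $(2d,2.5\,d)$ as ``where the delicate case analysis lives,'' and that case analysis \emph{is} the theorem --- everything you actually carry out (reading weights off normal forms, counting degree and dimension budgets) only verifies that the listed weights occur and are consistent, not that no others do.

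Moreover, the inductive scheme as you set it up does not close. With $g,\,g+h\in\mathcal{RM}(r,v-1)$, whose minimum weight is $d/2$, the constraint $\mathrm{wt}(f)<2.5\,d$ bounds each nonzero restriction only by (strictly less than) $2d$, i.e.\ by up to four times the minimum weight of the smaller code. The inductive hypothesis --- the same statement in $v-1$ variables --- covers weights below $2.5\cdot(d/2)=1.25\,d$, and the Kasami--Tokura classification \cite{KasamiTokura:70} (Lemma~\ref{l:KT}) covers weights below $2\cdot(d/2)=d$; neither reaches $2d$. Your parenthetical claim that the ``weight-$\le 2d$'' codewords of $\mathcal{RM}(r,v-1)$ are ``already-classified \ldots\ obtained by the same technique'' therefore asserts a classification up to $4\times$ minimum weight, which is strictly stronger than the statement being proved, and the argument is circular as written; indeed no such clean classification exists --- \cite{KTA:76} obtains only the weight \emph{spectrum} in $[2d,2.5\,d)$, with far more intricate bootstrapping than one restriction step. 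To repair the outline you would need either a genuinely stronger inductive invariant or the actual KTA machinery; as it stands, a codeword whose two restrictions both have weight in $[d,2d)$ escapes every tool you have put on the table.
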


Now, let us consider the Boolean characteristic function $\chi_{T_0\cup T_1}$ 
of the union $T_0\cup T_1$ of the legs of a $[t]$-trade $\{ T_0,T_1 \}$.
It is easy to see that for every binary $v$-tuple 
$(s_1,s_2,\ldots,s_v)$ representing an  $i$-subset $\bar s$, $i\le t$, 
the function $\chi_{T_0\cup T_1}$ is orthogonal to the monomial 
$M_{\bar s}=x_1^{s_1}x_2^{s_2}\ldots x_v^{s_v}$.
Indeed, as it follows from (\ref{eq:trade}), the number of ones of $M_{\bar s}$ in $T_0$ equals the number 
of ones of $M_{\bar s}$ in $T_1$; hence, the number of ones of $M_{\bar s}$ in $T_0\cup T_1$ is even 
and $M_{\bar s}$ and $\chi_{T_0\cup T_1}$ are orthogonal.
So, $\chi_{T_0\cup T_1}$ is orthogonal to every monomial of degree at most $t$ and, as a corollary, to 
every polynomial of degree at most $t$. 
This shows that $\chi_{T_0\cup T_1}$ belongs to $\mathcal{RM}(v-t-1,v)$. Utilizing 
Lemmas~\ref{l:BS}
and~\ref{l:KTA}
and noting that the volume of the trade is the half of the weight of $\chi_{T_0\cup T_1}$,
we get the statement of the theorem.
\end{proof}

\section{On the structure of trades of small volume}\label{s:struct}

As we see from the previous section, every $[t]$-trade corresponds to a codeword of the Reed--Muller code $\mathcal{RM}(v-t-1,v)$.
We are especially interested in the structure of $[t]$-trades of small volume, less than $2^{t+1}$; 
the codewords of $\mathcal{RM}(v-t-1,v)$ of the corresponding weights were completely characterized in \cite{KasamiTokura:70}.


\begin{lemma}[\cite{KasamiTokura:70}]\label{l:KT}
Any Boolean function $f$ from $\mathcal{RM}(r,v)$
of  weight greater than $2^{v-r} $ and less than $2\cdot 2^{v-r}$ 
can be reduced by an invertible affine transformation of its variables to one of the following forms:
\begin{enumerate}
 \item[\rm (A)]
$
f(y_1,\ldots,y_v) = y_1\dots y_{r-\mu}(y_{r-\mu+1}\dots y_r + y_{r+1}\dots y_{r+\mu}), 
$
\item[\rm (B)]
$
f(y_1,\ldots,y_v) = y_1\dots y_{r-2}(y_{r-1} y_m + y_{r+1}y_{r+2}+\ldots + y_{r+2\nu-3}y_{r+2\nu-2}),
$
\end{enumerate}
where $v\ge r+\mu$ and $r \ge \mu \ge 2$, $v\ge r-2+2\nu$ and $\nu \ge 3$.
Any Boolean function from $\mathcal{RM}(r,v)$
of minimum nonzero weight, $2^{v-r}$, is the characteristic function
of a $(v-r)$-dimensional affine subspace of $2^V$.
\end{lemma}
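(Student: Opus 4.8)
The plan is to prove the classification by induction on the number of variables $v$ (equivalently, on $v-r$), the engine being the restriction of $f$ to a hyperplane and its complement. Fixing a coordinate $y_v$, set $g_0=f|_{y_v=0}$ and $g_1=f|_{y_v=1}$, viewed as functions of $y_1,\dots,y_{v-1}$; then $g_0,g_1\in\mathcal{RM}(r,v-1)$, the derivative $h=g_0+g_1$ lies in $\mathcal{RM}(r-1,v-1)$, and weights add: $\mathrm{wt}(f)=\mathrm{wt}(g_0)+\mathrm{wt}(g_1)$. Since the minimum nonzero weight of $\mathcal{RM}(r,v-1)$ is $2^{(v-1)-r}$, the hypothesis $\mathrm{wt}(f)<2\cdot2^{v-r}=4\cdot2^{(v-1)-r}$ constrains the pair $(g_0,g_1)$ strongly: each nonzero restriction weighs at least $2^{(v-1)-r}$, and their sum lies below $4\cdot2^{(v-1)-r}$.

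I would first settle the minimum-weight statement, which also anchors the induction. If $\mathrm{wt}(f)=2^{v-r}$ and one restriction vanishes, then $f$ has a nonconstant affine factor ($f=(1+y_v)g_0$ or $f=y_v g_1$); peeling it off drops both $v$ and $r$ by one while preserving the weight, and the claim follows from the case of fewer variables. If instead both restrictions are nonzero, additivity forces each to have the minimum weight $2^{(v-1)-r}$, so each is the indicator of a $((v-1)-r)$-flat; the requirement $\deg f\le r$ forces $h=g_0+g_1$ to have degree at most $r-1$, which happens exactly when the two flats are parallel, and the union of two parallel $((v-1)-r)$-flats is a $(v-r)$-flat. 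Thus $f=\chi_F$ for a $(v-r)$-flat $F$, and the converse is immediate.

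For weights strictly between $2^{v-r}$ and $2\cdot2^{v-r}$ I would normalize first: by an invertible affine change of variables assume $f$ has degree exactly $r$ and is not divisible by any nonconstant affine form $\ell$ (otherwise factor it out and restrict to the flat $\{\ell=1\}$, which drops $(v,r)$ to $(v-1,r-1)$ while keeping the weight in the analogous range, and recurse). With no affine factor both restrictions are nonzero for every coordinate split, and the two target forms arise from the two ways the excess weight over the minimum can be organized on the flat carrying the support: either the residual function, after peeling off the common affine factors, is a sum of two monomials on disjoint variable sets, giving form (A) with parameter $\mu$ and weight $2\cdot2^{v-r}-2\cdot2^{v-r-\mu}$; or it is a genuine sum of at least three independent products of pairs, i.e. a bent function, giving form (B) with $\nu\ge3$ and weight $2\cdot2^{v-r}-2\cdot2^{v-r-\nu}$. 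The boundary cases $\mu=1$ and $\nu\le2$ are excluded precisely because they collapse to a flat or to form (A) respectively, which is what pins down the stated parameter ranges.

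The technical heart, and the main obstacle, is the inductive step when the derivative $h=g_0+g_1$ is nonzero. Then $\mathrm{wt}(h)\ge2^{v-r}$, and keeping $\mathrm{wt}(g_0)+\mathrm{wt}(g_1)$ below $4\cdot2^{(v-1)-r}$ forces the supports of $g_0$ and $g_1$ to be tightly aligned relative to the support of $h$. Two difficulties must be overcome. First, one of the two restrictions may have weight in $[2\cdot2^{(v-1)-r},4\cdot2^{(v-1)-r})$, outside the range covered by the inductive hypothesis; this is handled by choosing the splitting direction adaptively (for instance, a direction in which the derivative has small weight or low degree) so that both halves become classifiable. Second, once $g_0$ and $g_1$ are each known to be a flat indicator or of type (A)/(B), one must reassemble them into a single global form: this requires ruling out all incompatible pairings (a flat glued to a type-(B) piece, mismatched parameters, and so on) and checking that exactly the configurations yielding (A) and (B), with the dimension constraints $v\ge r+\mu$ and $v\ge r-2+2\nu$, survive. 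Controlling the derivative $h$ finely enough to force this alignment is where the delicate bookkeeping of Kasami and Tokura is indispensable.
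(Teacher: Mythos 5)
You should first be aware that the paper contains no proof of this statement at all: the lemma is imported verbatim, with attribution, from \cite{KasamiTokura:70} and used as a black box, so the only meaningful comparison is with Kasami and Tokura's original argument. Your outline does follow the opening moves of that argument correctly: splitting on a coordinate, observing $g_0,g_1\in\mathcal{RM}(r,v-1)$ and $h=g_0+g_1\in\mathcal{RM}(r-1,v-1)$, peeling off affine factors to drop $(v,r)$ to $(v-1,r-1)$, and the additivity of weights. Your treatment of the minimum-weight case is essentially complete and sound (if both restrictions are nonzero they are minimum-weight flats, and $\deg h\le r-1$ together with the minimum distance of $\mathcal{RM}(r-1,v-1)$ forces the two flats to be disjoint and their union a flat, hence parallel), and your weight computations $2\cdot 2^{v-r}-2\cdot 2^{v-r-\mu}$ for (A) and $2\cdot 2^{v-r}-2\cdot 2^{v-r-\nu}$ for (B) check out.

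However, for the actual content of the lemma --- the classification in the open weight range $(2^{v-r},\,2\cdot 2^{v-r})$ --- your proposal asserts the conclusion rather than proving it, and the gap is genuine at two specific points. First, the induction is not well-founded as described: when $f$ has no affine factor, one of the restrictions $g_0,g_1$ may have weight at least $2\cdot 2^{(v-1)-r}$, outside the range covered by the inductive hypothesis. You say this is ``handled by choosing the splitting direction adaptively,'' but you give no argument that a direction putting both restrictions in the classifiable range exists; establishing that is itself a nontrivial lemma in Kasami--Tokura, not a routine choice. Second, even granting that $g_0$ and $g_1$ are each a flat indicator or of type (A)/(B), each classification holds only up to its \emph{own} invertible affine transformation of $y_1,\ldots,y_{v-1}$; reassembling them into a single global normal form requires aligning the two coordinate systems and exhaustively excluding every incompatible pairing, which you name as a task but do not carry out. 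Your closing admission that ``the delicate bookkeeping of Kasami and Tokura is indispensable'' concedes precisely the point: what you have is a correct plan whose decisive steps are delegated to the theorem being proved. (Two minor points: the quadratic part in form (B) is bent only as a function of the $2\nu$ variables it involves, not of all $v$ variables, so ``i.e.\ a bent function'' is loose; and the ``$y_m$'' in the paper's form (B) is evidently a typo for ``$y_r$,'' which your reconstruction implicitly and correctly assumes.)
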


It is not difficult to understand that the functions from $\mathcal{RM}(v-t-1,v)$
are exactly the functions that have even number of ones in every $(v-t)$-subcube.
The set of ones of such a function is called a $t$-\emph{unitrade}
(in \cite{Pot12:spectra}, it was called a bitrade, which does not agree with the use of this term in the literature; 
in \cite{Potapov:2013:trade}, the term ``unitrade'' was introduced for similar objects corresponding to the latin trades).
The \emph{uni}on of the legs of a \emph{trade} is always a unitrade, but the reverse is not true.
We will say that a $t$-unitrade $T$ is \emph{splittable} if it can be split into a $[t]$-trade $\{T_0,T_1\}$, $T_0 \cup T_1 = T$.
So, Lemma~\ref{l:KT} completely characterizes all $t$-unitrades of cardinality less than $2^{t+2}$.
As we see, beyond the unitrades of minimum cardinality $2^{t+1}$, there are two types of them.

A unitrade of type (B) is an intersection of an affine subspace of dimension $t+3$ and the set of ones of a quadratic function.
The affine rank (the dimension of the affine span) of such unitrade is $t+3$. The existence of splittable unitrades of type (B) is an open problem.
Note that we excluded the case $\nu =2$ from type (B),
as it is covered by type (A), $\mu =2$, and examples of corresponding trades can be easily constructed.

A unitrade of type (A) is the symmetric difference of two intersecting affine subspaces of dimension $t+1$.
If the dimension of the intersection is $i$, $i<t$, then the cardinality of the unitrade is $2^{t+2}-2^{i+1}$
and its affine rank is $2t+2-i$.
The structure of two intersecting affine subspaces is rather clear in an abstract vector space. 
However, the possibility to split such a set into two parts $T_0$ and $T_1$ forming a trade essentially depends on the basis
(as we see from Lemma~\ref{l:subcubes} 
and the definition of a subcube).
So, even for the case 
of simple affine subspaces of dimension $t+1$, 
the situation is not trivial.

\begin{example}
 Both sets $T'=\{000, 011, 101, 110\}$ 
and $T''=\{000,\linebreak[2]011, \linebreak[2]100,\linebreak[2]111\}$ 
are two-dimensional linear subspaces of $2^V$, $|V|=3$, 
and so $\chi_{T'},\chi_{T''}\in \mathcal{RM}(v-t-1,v)$ for $v=3$, $t=1$. 
However, only the second one can be split into a trade, 
$\{T''_0,T''_1\}=\{\{000,111\},\linebreak[2]\{011,100\}\}$. 

Note that using Corollary~\ref{c:t-to-tkv}, every example of $[t]$-trade 
can be transformed to an example of $t$-$(2v,v)$ trade.

As we will see from the following proposition, 
the key property of $T''$ 
is that it is a linear span of disjoint blocks,
$011=\{2,3\}$ and $100=\{1\}$, 
while $T'$ has no such a basis.
\end{example}

\begin{proposition}\label{p:split}
An affine subspace $T \subset 2^V$ of dimension $t+1$ can be split into a $[t]$-trade $\{ T_0,T_1\}$
if and only if it is a translation of the linear span of mutually disjoint base subsets.
\end{proposition}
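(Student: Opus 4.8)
The plan is to translate both sides of the equivalence into a single invariant of $T$ — the set of distinct coordinate functionals it induces — and then to settle splittability by a Walsh--Hadamard (Fourier) analysis of the trade conditions. By Corollary~\ref{c:transl} both properties are translation invariant, so I would first assume $T$ is a linear subspace (translate it so that $\bar 0\in T$). For each $j\in V$ let $\phi_j\colon T\to\mathbb F_2$ be the restriction of the $j$-th coordinate; this is a linear functional, and since distinct subsets differ in some coordinate the $\phi_j$ span the dual of $T$, which has dimension $t+1$. Let $A$ be the set of \emph{distinct nonzero} functionals among the $\phi_j$, and for $a\in A$ put $S_a=\{\,j:\phi_j=a\,\}$; the $S_a$ are nonempty and pairwise disjoint, and every block of $T$ is the union of those $S_a$ on which the corresponding functional takes value $1$. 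The first (routine) step is to verify that ``$T$ is a translation of the span of pairwise disjoint base subsets'' is exactly equivalent to $|A|=t+1$: then $A$ is a basis of the dual and the $S_a$ are the disjoint base subsets, while conversely disjoint base subsets force each $\phi_j$ to be $0$ or a single dual-basis vector.

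Next I would rewrite the trade condition. Setting $\epsilon(\bar x)=+1$ on $T_0$ and $-1$ on $T_1$, condition (\ref{eq:trade}) for a subset $\bar s$ reads $\sum_{\bar x\in T}\epsilon(\bar x)\,[\bar s\subseteq\bar x]=0$, where $[\bar s\subseteq\bar x]=\prod_{j\in\bar s}x_j$. Because each block is a disjoint union of the $S_a$, this indicator depends only on which $S_a$ meet $\bar s$; a short argument then shows that the full system over all $\bar s$ with $|\bar s|\le t$ is equivalent to one condition per $B\subseteq A$ with $|B|\le t$, namely $\sum_{\bar x\in T}\epsilon(\bar x)\prod_{a\in B}a(\bar x)=0$ — and in particular the sizes $|S_a|$ are irrelevant. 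Identifying $T$ with $\mathbb F_2^{t+1}$ and applying the transform $\hat\epsilon(w)=\sum_{\bar x}\epsilon(\bar x)(-1)^{\langle w,\bar x\rangle}$, I expand each form via $a(\bar x)=\tfrac12\bigl(1-(-1)^{a(\bar x)}\bigr)$; the resulting conditions telescope, and by induction on $|B|$ they become simply $\hat\epsilon(w)=0$ for every $w$ that is a sum of at most $t$ elements of $A$. Thus $T$ is splittable if and only if there is a $\pm1$-valued function whose transform is supported off the set $\Sigma_t=\{\sum_{a\in R}a:\ R\subseteq A,\ |R|\le t\}$.

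With this reduction the ``if'' direction is immediate. If $|A|=t+1$, then $A$ is a basis and $\Sigma_t$ omits exactly the single vector $w_0=\sum_{a\in A}a$, so the character $\bar x\mapsto(-1)^{\langle w_0,\bar x\rangle}$ is a genuine $\pm1$ solution. Concretely this is the split of $T$ according to the parity of the number of base subsets used, and one can instead verify it by hand: the required alternating sum $\sum_{I}(-1)^{|I|}\,[\bar s\subseteq\bigcup_{i\in I}b_i]$ vanishes because, the base subsets being disjoint, the ``shadow'' of $\bar s$ is a proper subset of the $t+1$ indices, so one sums $(-1)^{|I|}$ over all supersets of a proper subset.

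The ``only if'' direction is the crux. I must show that $|A|\ge t+2$ forces $\Sigma_t=\mathbb F_2^{t+1}$, whence $\hat\epsilon\equiv0$, $\epsilon\equiv0$, and no split exists. The key combinatorial lemma is this: given any $w$, write it as $\sum_{a\in R}a$ with $R\subseteq A$ of minimal size; minimality forces $R$ to be linearly independent (a vanishing subsum could otherwise be deleted), so $|R|\le t+1$, and if $|R|=t+1$ then $R$ is a basis. Choosing $b\in A\setminus R$ — available precisely because $|A|\ge t+2$ — we have $b=\sum_{a\in R_b}a$ with $R_b\subseteq R$ and $|R_b|\ge2$ (since $b\notin R$), so $w=b+\sum_{a\in R\setminus R_b}a$ is a sum of at most $t+2-|R_b|\le t$ distinct elements of $A$. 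Making this swapping argument precise — in particular handling the independence/minimality step cleanly — is the main obstacle; once it is in place, everything else is bookkeeping.
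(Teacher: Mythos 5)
Your proposal is correct, but it takes a genuinely different route from the paper's own proof. The paper stays entirely inside the cube and argues combinatorially: for the \emph{if} direction it uses the same parity-of-coefficients split as you do, but verifies it via the subcube criterion of Lemma~\ref{l:subcubes} (for any $(v-t)$-subcube, disjointness guarantees some base subset $\bar x_i$ avoids all $t$ fixed coordinates, and adding $\bar x_i$ is a bijection between the two legs inside that subcube); for the \emph{only if} direction it considers the inclusion-minimal nonempty elements of $T$, takes two intersecting ones $\bar y,\bar z$ with $|\bar y\cup\bar z|$ minimal, shows $\bar y+\bar z$ is again minimal, and then constructs, for each pair among $\emptyset,\bar y,\bar z,\bar y+\bar z$, a $(v-t)$-subcube meeting $T$ in exactly that pair, after which a pigeonhole on any $2$-partition of $T$ rules out every candidate trade. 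You instead compress $T$ to the invariant $A$ of distinct nonzero coordinate functionals (with $|A|=t+1$ equivalent to the disjoint-base-subsets condition via the dual basis $S_a$), convert the trade equations by the telescoping/M\"obius induction into the vanishing of $\hat\epsilon$ on $\Sigma_t$, and finish with the swap lemma showing $|A|\ge t+2$ forces $\Sigma_t=T^*$, hence $\epsilon\equiv 0$ by Parseval/inversion. Despite your closing caveat, your sketch is essentially complete: the minimality-implies-independence step (a vanishing nonempty subsum could be deleted), the swap $w=b+\sigma(R\setminus R_b)$ with $|R_b|\ge 2$ because $b\notin R$ and $b\ne 0$, and the realizability of each $B\subseteq A$, $|B|\le t$, by a transversal $\bar s$ are all airtight as stated. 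As for what each approach buys: the paper's argument is elementary and self-contained, consistent in spirit with Lemma~\ref{l:subcubes}, at the cost of the somewhat delicate subcube-extension construction in step (**); your Fourier reduction is more structural and yields dividends the paper does not record --- the multiplicities $|S_a|$ are irrelevant to splittability, and when $|A|=t+1$ the complement of $\Sigma_t$ is the single point $\sigma(A)$, so the split of the subspace is unique up to interchanging $T_0$ and $T_1$.
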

\begin{proof}
 \emph{If.} Assume without loss of generality that $T$ is a linear span of mutually disjoint (as subsets of $V$)
 tuples $\bar x_1$, \ldots, $\bar x_{t+1}$. So,
 $$ T = \{ \alpha_1\bar x_1+\ldots +\alpha_{t+1}\bar x_{t+1} \ :\ \alpha_1,\ldots,\alpha_{t+1}\in\{0,1\} \}.$$
 Let $T_0$ ($T_1$) consist of blocks from $T$ with even (odd, respectively) sum $\alpha_1+\ldots+\alpha_{t+1}$.
 Every $(v-t)$-subcube $S$ consists of the tuples with fixed values in some $t$ positions. 
 There is $i\in \{1,\ldots,t+1\}$ such that $\bar x_i$ has zeros in all these positions. 
 This means that if $\bar z \in S$, then $\bar z +\bar x_i  \in S$. 
 In particular, adding $\bar x_i$ is a one-to-one correspondence between $T_0\cap S$ and $T_1\cap S$.
 Hence, $|T_0\cap S|=|T_1\cap S|$.
 
 \emph{Only if.} Let $B$ be the set of minimal (by inclusion) nonempty tuples from $T$.
 If $B$ consists of mutually disjoint subsets, then it is easy to see that $B$ is a basis of $T$.
 Assume that at least two subsets from $B$ intersect. 
 Let $\bar y$ and $\bar z$ be two different intersecting subsets from $B$ with minimum $|\bar y\cup\bar z|$
 (among the all intersecting pairs of subsets from $B$).
 
 (*) \emph{We state that $\bar y+\bar z \in B$}.
 Indeed, if it is not so, 
 then $B$ contains a proper subset $\bar x$ of $\bar y+\bar z$.
 In this case, $\bar x$ intersects with $\bar y$ (otherwise $\bar x\subset \bar z$) 
 and with $\bar z$ (otherwise $\bar x\subset \bar y$), and one of the pairs 
 $(\bar y,\bar x)$, $(\bar z,\bar x)$ contradicts the minimality of $|\bar y\cup\bar z|$.
 
 (**) \emph{For every two elements from $\emptyset$, $\bar y$, $\bar z$, $\bar z+\bar y$, 
 there is a $(v-t)$-subcube intersecting with $T$ in these and only these two elements}. 
Because of the linearity, it is sufficient to prove the statement for
$\emptyset$ and any $\bar x$ from  $\bar y$, $\bar z$, $\bar z+\bar y$.
Let $(\bar e_1, \ldots, \bar e_v)$ be the standard basis in $2^V$ 
(that is, $\bar e_i= \{i\}$, $i = 1,\ldots,v$),
and 
let $\bar x = \{i_0,\ldots,i_l\} = \bar e_{i_0}+ \ldots + \bar e_{i_l}$, where $l+1$ is the size of $\bar x$.
As follows from (*), the $l$-subcube $S_l$ spanned by $\bar e_{i_1}, \ldots , \bar e_{i_l}$ intersects with $T$ trivially, 
i.e., in only one element $\emptyset = (0,\ldots,0)$. Then, the dimension of the linear span of $S_l$ and $T$ is
$l+t+1$. 
If this value is less than $v$, than there is $i_{l+1}$ such that $\bar e_{i_{l+1}}$ 
does not belong to the linear span of $S_l$ and $T$. We add $\bar e_{i_{l+1}}$ to the basis $\bar e_{i_1}, \ldots , \bar e_{i_l}$
obtaining a subcube $S_{l+1}$. Again, if $l+t+2 < v$, then we can add some $\bar e_{i_{l+2}}$, and so on.
On the $(v-t-l-1)$th step, we will obtain a $(v-t-1)$-subcube that intersects with $T$ only in $\emptyset$.
Then, adding $\bar e_{i_{0}}$ to the basis leads to a $(v-t)$-subcube that intersects with $T$ only in $\emptyset$ and $\bar x$.

The rest of the proof is rather obvious.
If we have a partition $\{ T_0,T_1 \}$ of $T$, 
then at least two of
$\bar y$, $\bar z$, $\bar y + \bar z$ belong to the same cell of the partition, $T_0$ or $T_1$.
As follows from (**) and Lemma~\ref{l:subcubes}, such partition $\{ T_0,T_1 \}$ cannot be a trade.
\end{proof}

So, every $[t]$-trade of minimum volume is a partition of 
some affine subspace $T$, which is the translation by some block $\bar w$ of a linear subspace.
It is rather clear that if such trade consists of the blocks of the same size $k$,
then $\bar w$ intersects with every base subset $x_j$ in exactly $|x_j|/2$ elements, 
which means that $|x_j|$ must be even. 
This gives an alternative point of view to the characterization \cite{KMW:92} of $t$-$(v,k)$
trades of volume $2^t$.

Using the following well-known and easy fact, it is possible to construct a $[t]$-bitrade of volume $2^{t+1}-2^{i}$ 
from $[t]$-bitrades  of volume $2^{t}$ for every $t\ge 1$ and $i<t$, see Proposition~\ref{ex:321} below
(essentially, in view of Corollary~\ref{c:t-to-tkv}, it is a result of \cite{GrayRam:98}).
\begin{lemma}\label{l:sum}
 Assume that $\{T_0,T_1\}$ and  $\{T'_0,T'_1\}$ are two different $[t]$-trades 
 such that $T_0 \cap T'_0 =T_1 \cap T'_1 = \emptyset$.
 Then $\{(T_0 \cup T'_0)\backslash (T_1 \cup T'_1), (T_1 \cup T'_1)\backslash (T_0 \cup T'_0) \}$ 
 is a $[t]$-trade.
\end{lemma}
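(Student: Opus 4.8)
The plan is to reduce everything to the subcube characterization of Lemma~\ref{l:subcubes} and to encode each trade by a single integer-valued \emph{signed indicator} on the $v$-cube, so that the trade condition becomes linear and the two given trades can simply be added. Concretely, I would set $f=\chi_{T_0}-\chi_{T_1}$ and $f'=\chi_{T'_0}-\chi_{T'_1}$, where $\chi_A$ is the integer-valued characteristic function of $A$. By Lemma~\ref{l:subcubes}, the trade property of $\{T_0,T_1\}$ is exactly the statement that $\sum_{\bar x\in S}f(\bar x)=|T_0\cap S|-|T_1\cap S|=0$ for every $(v-t)$-subcube $S$, and likewise for $f'$.

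First I would form $g=f+f'$. By linearity, $\sum_{\bar x\in S}g(\bar x)=0$ for every $(v-t)$-subcube $S$, so $g$ already has the ``balanced on every subcube'' property that Lemma~\ref{l:subcubes} asks of a trade; what remains is to recognize $g$ as the signed indicator of the pair claimed in the statement. Here the two disjointness hypotheses do the work: since $T_0\cap T'_0=\emptyset$ we have $\chi_{T_0}+\chi_{T'_0}=\chi_{T_0\cup T'_0}$, and since $T_1\cap T'_1=\emptyset$ we have $\chi_{T_1}+\chi_{T'_1}=\chi_{T_1\cup T'_1}$; hence $g=\chi_{T_0\cup T'_0}-\chi_{T_1\cup T'_1}$.

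The key point is then a pointwise check that $g$ takes values only in $\{-1,0,1\}$ and that its $+1$- and $-1$-sets are exactly the two legs in the statement. Writing $A=T_0\cup T'_0$ and $B=T_1\cup T'_1$, at each $\bar x$ both $\chi_A(\bar x)$ and $\chi_B(\bar x)$ lie in $\{0,1\}$, so $g(\bar x)=1$ precisely on $A\backslash B=U_0$, $g(\bar x)=-1$ precisely on $B\backslash A=U_1$, and $g(\bar x)=0$ elsewhere (the points in neither or in both). Thus $g=\chi_{U_0}-\chi_{U_1}$, and since $U_0\subseteq A$ is disjoint from $B\supseteq U_1$, the two legs $U_0,U_1$ are disjoint, as a trade requires. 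Combining this with the subcube balance $\sum_{\bar x\in S}g(\bar x)=|U_0\cap S|-|U_1\cap S|=0$ established above, Lemma~\ref{l:subcubes} immediately gives that $\{U_0,U_1\}$ is a $[t]$-trade.

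I do not expect a genuine obstacle here: the only subtlety is the bookkeeping that forces $g$ into $\{-1,0,1\}$, which is exactly where the hypotheses $T_0\cap T'_0=\emptyset$ and $T_1\cap T'_1=\emptyset$ are used --- without them $g$ could reach $\pm 2$ and would no longer be the signed indicator of any pair of sets. The ``two different trades'' assumption plays no role in the argument that $\{U_0,U_1\}$ is a trade; it only guarantees the result is nontrivial, ruling out the degenerate case where the two given trades coincide up to swapping legs and $U_0=U_1=\emptyset$.
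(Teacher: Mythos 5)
Your proof is correct. Note that the paper itself offers no proof of Lemma~\ref{l:sum} --- it is introduced as a ``well-known and easy fact'' and stated bare --- so there is nothing to compare against line by line; your argument simply fills a gap the author left open. The route you chose (signed indicators $f=\chi_{T_0}-\chi_{T_1}$, linearity over every $(v-t)$-subcube via Lemma~\ref{l:subcubes}, then the pointwise check that the disjointness hypotheses $T_0\cap T'_0=T_1\cap T'_1=\emptyset$ force $g=f+f'$ into $\{-1,0,1\}$ with $U_0=A\backslash B$ and $U_1=B\backslash A$ as its level sets) is the standard one and is watertight: the subcube sums give $|U_0\cap S|-|U_1\cap S|=0$, and $U_0,U_1$ are disjoint, so the ``if'' direction of Lemma~\ref{l:subcubes} applies. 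An equivalent and equally short variant bypasses subcubes entirely: for each $\bar s$ with $|\bar s|\le t$, the inclusion count $N(X,\bar s)=|\{\bar x\in X : \bar s\subseteq\bar x\}|$ is additive on the disjoint unions and subtractive on the set differences, so $N(U_0,\bar s)-N(U_1,\bar s)=\bigl(N(T_0,\bar s)-N(T_1,\bar s)\bigr)+\bigl(N(T'_0,\bar s)-N(T'_1,\bar s)\bigr)=0$ directly from the defining equation (\ref{eq:trade}); both versions are the same linearity argument in different clothing. Your closing remarks are also accurate: the ``two different trades'' hypothesis is not needed for trade-ness (it only excludes the degenerate outcome $U_0=U_1=\emptyset$ when $\{T'_0,T'_1\}=\{T_1,T_0\}$), and the disjointness hypotheses are genuinely essential --- the referee report reproduced above shows the lemma was false as originally stated, precisely because without them the sum of signed indicators need not be the signed indicator of any pair of sets.
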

\begin{proposition}\label{ex:321}
 For every $t>0$ and $i<t$, $2t+2-i\le v$, the symmetrical difference of two $(t+1)$-subcubes 
\begin{eqnarray*}
 && T=\langle \{1\},\{2\},\ldots,\{i\},\{i+1\},\{i+2\},\ldots,\{t+1\} \rangle,\\
 && T'=\langle \{1\},\{2\},\ldots,\{i\},\{t+2\},\{t+3\},\ldots,\{2t+2-i\} \rangle
\end{eqnarray*}
 is a splittable type-(A) $t$-unitrade of cardinality $2^{t+2}-2^{i+1}$
 (the volume of the corresponding trade is $2^{t+1}-2^{i}$).
\end{proposition}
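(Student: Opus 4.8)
The plan is to exhibit $T\triangle T'$ as the union of the legs of a trade obtained by gluing together, via Lemma~\ref{l:sum}, the two minimum-volume trades that live on $T$ and on $T'$.

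First I would record the basic data. Both $T$ and $T'$ are linear subspaces of dimension $t+1$ whose intersection is $W=\langle\{1\},\ldots,\{i\}\rangle$, of dimension $i$; hence, by inclusion-exclusion, $|T\triangle T'|=2\cdot 2^{t+1}-2\cdot 2^{i}=2^{t+2}-2^{i+1}$. Since each of $T$ and $T'$ is the linear span of mutually disjoint singletons, Proposition~\ref{p:split} splits each into a $[t]$-trade of volume $2^{t}$; following the proof of that proposition I let $\{T_0,T_1\}$ (resp.\ $\{T'_0,T'_1\}$) be the split of $T$ (resp.\ $T'$) according to the parity of the coefficient sum $\alpha_1+\cdots+\alpha_{t+1}$ relative to the corresponding disjoint basis.

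The crux is to arrange the two trades so that the hypotheses $T_0\cap T'_0=T_1\cap T'_1=\emptyset$ of Lemma~\ref{l:sum} hold. The only candidates for common blocks lie in $W$, and on $W$ both splittings assign a block to its part according to the \emph{same} parity $\alpha_1+\cdots+\alpha_i$ of its first $i$ coordinates; with the natural labelling the two even legs would already share $\emptyset$. I would therefore swap the labels of $\{T'_0,T'_1\}$: the two splittings then become opposite on all of $W$, while off $W$ the subspaces $T$ and $T'$ are disjoint, so both required intersections are empty. Lemma~\ref{l:sum} now yields a $[t]$-trade $\{A,B\}$ with $A=(T_0\cup T'_0)\setminus(T_1\cup T'_1)$ and $B=(T_1\cup T'_1)\setminus(T_0\cup T'_0)$.

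It remains to identify $\{A,B\}$ with the asserted split of $T\triangle T'$ and to name the type. Tracing membership, a block of $T\setminus W$ or of $T'\setminus W$ lies in exactly one of the four legs and so is sent to $A$ or $B$; each block of $W$ now sits in one leg coming from $T$ and in the opposite leg coming from $T'$, hence in both $T_0\cup T'_0$ and $T_1\cup T'_1$, and is thus cancelled. Therefore $A\sqcup B=(T\cup T')\setminus W=T\triangle T'$, which proves splittability and gives volume $\frac12|T\triangle T'|=2^{t+1}-2^{i}$. To certify the type I would write $\chi_{T\triangle T'}=\chi_T+\chi_{T'}$, factor out the linear forms common to the two products, and substitute $y_j=1+x_j$; with the index ranges bookkept correctly the two surviving products both have length $t+1-i$, so the outcome is precisely the normal form (A) of Lemma~\ref{l:KT} (with $r=v-t-1$ and $\mu=t+1-i$), the side conditions $2\le\mu\le r$ and $v\ge r+\mu$ reducing to $i\le t-1$ and $v\ge 2t+2-i$. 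I expect the main obstacles to be exactly these two points: the label swap that makes Lemma~\ref{l:sum} applicable, and the careful handling of the index ranges $\{t+2,\ldots,2t+2-i\}$ and $\{i+1,\ldots,t+1\}$ so that the two product blocks come out of equal length $t+1-i$ and the function matches type (A) rather than an unbalanced expression.
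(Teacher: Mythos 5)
Your proposal is correct and matches the paper's proof in essence: the paper's one-line proof defines exactly your splits (even/odd cardinality on $T$, with the labels swapped on $T'$, which coincides with the parity of the coefficient sum since the basis elements are disjoint singletons) and leaves the cancellation of $W=T\cap T'$ via Lemma~\ref{l:sum} and the cardinality count implicit. Your explicit verification of the type-(A) normal form with $\mu=t+1-i$ is also sound, though the paper obtains that classification from the structural discussion of type-(A) unitrades preceding the proposition rather than inside the proof.
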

\begin{proof}
Let $T_0$ ($T_1$) consist of sets of even (odd) cardinality of $T$;
let $T'_0$ ($T'_1$) consist of sets of odd (even) cardinality of $T'$.
\end{proof}

So, some splittable unitrades of type (A) can be constructed as the symmetrical difference of minimum splittable unitrades. 
However, there are splittable unitrades of type (A) that cannot be treated in such a way:

\begin{example}\label{ex:had}
Consider two linear subspaces 
\begin{eqnarray*}
 C_0&{=}&\{0000000,\linebreak[2] 0011101,\linebreak[2] 0111010,\linebreak[2] 1110100,\linebreak[2] 1101001,\linebreak[2] 1010011,\linebreak[2] 0100111,\linebreak[2] 1001110\},\\
 C_1&{=}&\{0000000,\linebreak[2] 0010111,\linebreak[2] 0101110,\linebreak[2] 1011100,\linebreak[2] 0111001,\linebreak[2] 1110010,\linebreak[2] 1100101,\linebreak[2] 1001011\}
 \end{eqnarray*}
 (binary simplex codes of length $7$).
It is not difficult to check that each of them is a non-splittable $2$-unitrade.
 Their symmetrical difference is a $2$-unitrade, which can be split to the $2$-$(7,4)$ trade 
 $\{C_0 \backslash \{0000000\}, C_1 \backslash \{0000000\} \} $.
\end{example}

In the end of this section, we conclude that while the small $t$-unitrades 
are characterized by the result of \cite{KasamiTokura:70},
the characterization of small $[t]$-trades and $t$-$(v,k)$ trades, in particular, 
the existence of trades of type (B), remains to be an open problem.
Similar things happen with some latin trades \cite{Potapov:2013:trade}, 
which supports the general principle that unitrades are more easy to characterize than trades.


\section*{Acknowledgements}
The research was carried out at the Sobolev Institute of Mathematics
at the expense of the Russian Science Foundation (project No. 14-11-00555).

The author is grateful to Professor Gholamreza Khosrovshahi for introducing the conjecture solved 
in this paper and for the hospitality in the Institute for Research in Fundamental Science (IPM) in Tehran.



\providecommand\href[2]{#2} \providecommand\url[1]{\href{#1}{#1}}
  \def\DOI#1{{\small {DOI}:
  \href{http://dx.doi.org/#1}{#1}}}\def\DOIURL#1#2{{\small{DOI}:
  \href{http://dx.doi.org/#2}{#1}}}

\end{document} 

> Reviewing: 1

> Comments to the Author. See the report.

> Report on the paper
On t+1 smallest volumes of t-trades
by Denis S. Krotov

> The main result of the paper settles a conjecture from 90’s on the gaps in the volumes
of t-trades. In fact the result is stronger than the original conjecture, as it holds for the
larger class of so called [t]-trades. Although the proof is more or less a direct consequence
of a result of Berlekamp and Sloane from 1969 on weight distribution of Reed–Muller
codes, but seeing the connection is an elegant observation which is potentially fruitful
in studying of trades. I recommend the publication of the paper in J. Combin. Design
subject to a revision taking into accounts the following comments and suggestions.

> 1. I think a title like “On the gaps of the spectrum of volumes of trades” reflects better
the content of the paper, and it is more consistent with the descriptions of the
conjecture in the literature.

Done

> 2. Since there is no gap between 2t+1-2 and 2t+1-1, it would be better to consider
i in the range 2,..., t, in Abstract and in Theorem 1.

Done

> 3. The citation of the conjecture needs to be more precise. Indeed, the conjecture
first appeared in the paper: “G. B. Khosrovshahi, On trades and designs, Comput.
Statist. Data Anal. 10 (1990), 163–167” and later on independently in [13].

Both papers are cited now.

> 4. As the main result of the paper is about the t-trades with volumes less than 2t+1,
a natural question arises: How about the trades with larger volumes? It would be
appropriate to give the state of the art of existence of trades with volumes greater
than 2t+1. Especially, the halving conjecture can be stated in terms of volumes of
trades, which shows the importance of the subject.

I have mentioned the halving conjecture in this context. Moreover, I have extended the result up to weight $2.5*2^{t+1}$.

> 5. Lemma 5 is not true as it is stated. Consider T0 = {12, ∅}, T1 = {1, 2} and T00 ={2, 3}, T01 = {23, ∅} as [1]-trades. 
Then T0∆T01 = {12, 23}, T00∆T1 = {1, 3}, is not a [1]-trade. It is also strange that the last result of the paper is a lemma!

Corrected.

> 6. Page 1, line −5: “...is included \textbf{in} at most one block of...”

Fixed

> 7. Page 2, last line of Section 1: “...will probably be more \textbf{complicated}.”

Fixed

> 8. Proof of Lemma 2: explain what is meant by “covering a set by a block.”

Now it is used ``including'' instead, which is the same as in the definition and a standard term for sets.

> 9. Proof of Corollary 2: “Applying Lemma 2, v times \textbf{for}...”.
Also it seems that
Corollary 1 should be used at the end.

Fixed

> 10. Proof of Theorem 1: It is unnecessary devoting a section to the short proof of
Theorem 1. Also it seems that the other results of Section 2 have nothing to do
with Theorem 1. So, Theorem 1 and its proof can be placed at the beginning of this
section.

The sectioning was changed. The ``other results'' are still before the main theorem, 
as they illustrate the definitions. However, they are now in a separate section 
and the reader is said that these results are secondary and not used in the main theorem.

> 11. Page 5, line 2: “We are especially \textbf{interested in the structure of}...”, and in the
line following that: “...of the corresponding weights...”

Fixed

> 12. Example 1: “However, only \textbf{the second one}... ”, and in the line following that:
“...can be transformed to an example of \textbf{$t$-$(2v, v)$ trade}.”

Fixed

> 13. Reference 1: “On the non-existence \textbf{of} some Steiner...”

Fixed

=================================
Reviewing: 2

Comments to the Author:

The main result of the paper is the proof of a conjecture by Mahmoodian and
Soltankhan {13,7] about the spectrum of possible volumes
of $t$-trades containing less than $2^{t+1}$ blocks.
Some special cases of this conjecture (for example, the case of Steiner
$t$-trades) have been proved previously. The proof utilizes the weight
distribution of the Reed-Muller code of length $2^v$ and order $v-t-1$.
The main result, as well as its proof are worth publishing.

The language needs some minor improvements. The definite article "the"
is often missing. For example, the title would read better
if "the" is added before "$t+1$". Similarly, "the" should be added
before "motivations" in line 4 of the Introduction.
Reference [13] has to be completed by adding the missing pages.